\documentclass{article}
\usepackage[utf8]{inputenc}
\usepackage{amsmath,amssymb,amsthm,amsfonts}
\usepackage{listings}
\usepackage{fullpage}
\usepackage{tikz}
\usepackage{capt-of}
\usetikzlibrary{positioning,shapes,fit,arrows}
\usepackage{algorithm,algpseudocode}

\usepackage{scalerel}
\usepackage{comment}
\usepackage{titling}
\usepackage{mathrsfs}
\title{Elementary analysis of isolated zeroes of a polynomial system}
\author{%
Mitali Bafna\thanks{School of Engineering and Applied Sciences, Harvard University, Cambridge, Massachusetts, USA. Supported in part by a Simons Investigator Award and NSF Award CCF 1715187. Email: \texttt{mitalibafna@g.harvard.edu}.}
\and Madhu Sudan\thanks{School of Engineering and Applied Sciences, Harvard University, Cambridge, Massachusetts, USA. Supported in part by a Simons Investigator Award and NSF Award CCF 1715187. Email: \texttt{madhu@cs.harvard.edu}.} 
\and Santhoshini Velusamy\thanks{School of Engineering and Applied Sciences, Harvard University, Cambridge, Massachusetts, USA. Supported in part by a Simons Investigator Award and NSF Award CCF 1715187. Email: \texttt{svelusamy@g.harvard.edu}.}
\and David Xiang\thanks{Harvard College. Email: \texttt{davidxiang@college.harvard.edu}.}
}

\newtheorem{theorem}{Theorem}[section]
\newtheorem{lemma}[theorem]{Lemma}
\newtheorem{definition}[theorem]{Definition}
\newtheorem{corollary}[theorem]{Corollary}
\newtheorem{remark}[theorem]{Remark}

\newcommand\numberthis{\addtocounter{equation}{1}\tag{\theequation}}

\usepackage{xcolor}

\newcommand{\f}{\textbf{f}}
\newcommand{\g}{\textbf{g}}
\newcommand{\h}{\textbf{h}}
\newcommand{\va}{\textbf{a}}
\newcommand{\vb}{\textbf{b}}

\newcommand{\F}{\mathbb{F}}
\newcommand{\Z}{\mathbb{Z}}

\begin{document}

\date{}
\maketitle

\begin{abstract}
    Wooley ({\em J. Number Theory}, 1996) gave an elementary proof of a Bezout like theorem allowing one to count the number of isolated integer roots of a system of polynomial equations modulo some prime power. 
    In this article, we adapt the proof to a slightly different setting. Specifically, we consider polynomials with coefficients from a polynomial ring $\mathbb{F}[t]$ for an arbitrary field $\mathbb{F}$ and give an upper bound on the number of isolated roots modulo $t^s$ for an arbitrary positive integer $s$. In particular, using $s=1$, we can bound the number of isolated roots of a system of polynomials over an arbitrary field $\mathbb{F}$. 
\end{abstract}

\section{Introduction}
Wooley~\cite{Wooley} considered a system of $n$ modular polynomial equations in $n$ variables over the integers modulo any prime power $p^s$,
and gave an elementary proof of the fact that the number of {\em non-singular} solutions is bounded by the product of the degrees. 
Later, Zhao~\cite[Lemma A.4]{Zhao}, noted that this result can also be adapted to polynomials over the ring $\F[t]$, the ring of polynomials in variable $t$ over any finite field $\F = \F_q$. 
In this article, whose goal is expository, we give a full proof of this result elaborating on some of the algebraic steps that are omitted in \cite{Zhao} (in particular in the proof of Lemma A.3 there). We note also in passing that the proof does not require the finiteness of the field $\F$. Our proof also simplifies some of the steps mildly. 

 We start with some basic notation and then state the main theorem.
For positive integer $n$, let $[n]$ denote the set $[n] = \{1,\ldots,n\}$. 
Throughout this paper we will be working with the multivariate polynomial ring $R[X_1,\ldots,X_n]$ for $R = \F$ for some field $\F$, or $R = \F[t]$ the ring of polynomials in $t$ over $\F$, or $R = \F(t)$ the field of rational functions in $t$ over $\F$. 
For $f \in R[X_1,\ldots,X_n]$ we let $\deg(f)$ denote the total degree of $f$ in $X_1,\ldots,X_n$  and we let $\deg_{X_i}(f)$ to denote its degree in the variable $X_i$. In particular even when $R = \F[t]$ or $R = \F(t)$ we ignore the degree over $t$ in $\deg(f)$. 
For $f \in \F[t][X_1,\ldots,X_n]$ and $i \in [n]$ let $\frac{\partial f}{\partial X_i}$ denote the partial derivative of $f$ with respect to the variable $X_i$. For a sequence of polynomials $\textbf{f} = (f_1,\ldots,f_n)$, with $f_i \in \F[t][X_1,\ldots,X_n]$, let $$J(\textbf{f}) =  \left[\frac{\partial f_j}{\partial X_i}\right]_{1\leq i,j \leq n}.$$ Note $J(\textbf{f}) \in (\F[t][X_1,\ldots,X_n])^{n \times n}$.  For $f \in \F[t][X_1,\ldots,X_n]$ and $\mathbf{a} \in \F[t]^n$ we let $f(\mathbf{a}) \in \F[t]$ denote the evaluation of $f$ at $\mathbf{a}$.
We use $J(\f;\va)$ to denote the evaluation of $J(\f)$ at $\va$, i.e., each element of $J(\f)$ is evaluated at $\va$. 

\begin{definition}[Isolated Zero]
For a system of polynomials $\f = (f_1,\ldots,f_n) \in (\F[X_1,\ldots,X_n])^n$, we say that $\mathbf{a} \in \F^n$ is an {\em isolated zero} of $\f$ if 
$f_i(\mathbf{a}) = 0$ for every $i \in [n]$ and $\det(J(\f;\mathbf{a})) \ne 0$.
Let $\mathcal{N}(\f)$ denote the number of isolated zeroes of $\f$.

For a system of polynomials, $\f = (f_1,\ldots,f_n) \in (\F[t][X_1,\ldots,X_n])^n$ and a positive integer $s$, we say that $\mathbf{a} \in (\F[t]/t^s)^n$ is an {\em isolated zero} of $\f$ modulo $t^s$ if 
$f_i(\mathbf{a}) = 0 \pmod {t^s}$ for every $i\in[n]$ and $\det(J(\f;\mathbf{a})) \ne 0 (\pmod{t})$.
Let $\mathcal{N}_s(\f)$ denote the number of isolated zeroes of $\f$ modulo $t^s$.
\end{definition}

\begin{theorem}\label{main theorem}
Let $\f = (f_1,\dots,f_n)$ be a sequence of polynomials in $\mathbb{F}[t]\Big[X_1,\dots,X_n\Big]$ with $\deg(f_i) \leq k_i$. 
Then for every positive integer $s$, we have  $\mathcal{N}_s(\textbf{f})\leq k_1\cdots k_n$.
\end{theorem}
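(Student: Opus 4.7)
The plan is to prove the theorem by two successive reductions: first reduce from general $s$ to the case $s=1$ by induction on $s$, and then handle the $s=1$ case (a Bezout-like bound over $\F$) by induction on $n$.

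For the first reduction, I would show that the natural projection $\pi:(\F[t]/t^s)^n \to (\F[t]/t^{s-1})^n$ restricts to an injection on isolated zeros of $\f$ mod $t^s$, with image contained in the isolated zeros of $\f$ mod $t^{s-1}$. Given two isolated zeros $\va, \va'$ mod $t^s$ with $\pi(\va) = \pi(\va')$, write $\va - \va' \equiv t^{s-1}\mathbf{c} \pmod{t^s}$ for some $\mathbf{c} \in \F^n$. A Taylor expansion of each $f_i$ at $\va'$ gives $0 \equiv f_i(\va) - f_i(\va') \equiv t^{s-1}\bigl(J(\f;\va')\mathbf{c}\bigr)_i \pmod{t^s}$, so $J(\f;\va')\mathbf{c} \equiv 0 \pmod{t}$. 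Since $\det J(\f;\va') \ne 0 \pmod t$, the matrix $J(\f;\va') \bmod t$ is invertible over $\F$, forcing $\mathbf{c} = 0$ and hence $\va = \va'$. That $\pi(\va)$ remains an isolated zero mod $t^{s-1}$ is immediate from the definitions. Iterating yields $\mathcal{N}_s(\f) \le \mathcal{N}_1(\f)$.

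For the $s=1$ case, $\F[t]/t \cong \F$, and isolated zeros of $\f$ mod $t$ correspond to isolated zeros of $\bar\f := \f \bmod t \in \F[X_1,\dots,X_n]^n$ over $\F$, with $\deg(\bar f_i) \le k_i$. I plan to prove $\mathcal{N}(\bar\f) \le k_1 \cdots k_n$ by induction on $n$. The base case $n=1$ is immediate, since a univariate polynomial of degree at most $k_1$ has at most $k_1$ roots in $\F$.

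The inductive step of the $s=1$ case is the main obstacle. My planned approach is to eliminate one variable. After a linear change of coordinates ensuring that $X_n^{k_n}$ appears with nonzero coefficient in $\bar f_n$, projecting the isolated zeros of $\bar \f$ onto the first $n-1$ coordinates yields fibers of size at most $k_n$, since $\bar f_n$ restricted to a fiber is a univariate polynomial of degree at most $k_n$. It would then suffice to bound the projected set by $k_1\cdots k_{n-1}$. The challenge is to exhibit an $(n-1)$-variable system $\bar\g = (\bar g_1, \ldots, \bar g_{n-1})$ with $\deg(\bar g_i) \le k_i$ whose isolated zero set contains the projected set, so the inductive hypothesis applies. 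Naive resultants produce degrees of order $k_i k_n$, which is too large; constructing $\bar\g$ with the correct degree bounds---likely via a careful multivariate division modulo $\bar f_n$ that exploits invertibility of the Jacobian at each isolated zero---is the main technical hurdle I anticipate.
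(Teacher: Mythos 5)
Your first reduction is correct and clean: the projection $(\F[t]/t^s)^n \to (\F[t]/t^{s-1})^n$ is injective on isolated zeros of $\f$ mod $t^s$ (the Taylor-expansion argument works because the quadratic terms in $\va - \va' = t^{s-1}\mathbf{c}$ contribute at order $t^{2s-2} \geq t^s$ when $s \geq 2$, and the nonsingular Jacobian forces $\mathbf{c}=0$), and the image lands in the isolated zeros mod $t^{s-1}$. So $\mathcal{N}_s(\f) \leq \mathcal{N}_1(\f)$ is fine.

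The gap is that this reduction concentrates \emph{all} of the difficulty in the $s=1$ case, and you have not proved that case; you have only flagged the inductive step on $n$ as the ``main technical hurdle.'' That hurdle is not a loose end you can reasonably expect to patch. The obstructions you name are real: any elimination-of-a-variable scheme (resultants, or division modulo $\bar f_n$) produces an $(n-1)$-variable system whose degrees multiply by $k_n$ rather than leaving $k_1,\dots,k_{n-1}$ intact, and there is no obvious way to arrange that the projected isolated-zero set sits inside the \emph{isolated} zero set of such a system with the right degree bounds. Nonsingular Bezout over a field that is not algebraically closed is precisely the hard content of the theorem, and inducting on $n$ via elimination is known not to give the clean $\prod k_i$ bound directly.

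What makes the situation worse for your plan is that the paper's route is the opposite of yours: it proves the theorem for all $s$ at once by working over $\F[t]$ from the start, and then obtains the $s=1$ Bezout bound over $\F$ as a \emph{corollary}. The paper even remarks on this as a highlight --- the field-theoretic statement is proved by passing through $\F[t]$, $\F[[t]]$, and $\F((t))$. Concretely, the paper's argument has no induction on $n$ at all. It constructs a low-$Z$-degree algebraic dependence $\Psi(Y_1,\dots,Y_n,Z)$ with $\Psi(f_1,\dots,f_n,X_1)=0$ (Lemma~\ref{lemma 2}, by a dimension count), specializes $Y_i \mapsto c_i t^s$ to get a nonzero univariate $Q(Z) \in \F[t][Z]$ of degree $\leq \prod k_i$, shows each isolated zero's first coordinate is a root of $Q$ mod $t^s$, and then uses a Hensel-lifting lemma (Lemma~\ref{lemma 3}) --- applied to the shifted system $g_i = f_i - c_i t^s$, crucially exploiting nonsingularity of the Jacobian --- to lift these to genuine, distinct roots of $Q$ in $\F((t))$. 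Since a nonzero univariate polynomial over the \emph{field} $\F((t))$ cannot have more roots than its degree, the bound follows. So the $t$-adic machinery is not an optional feature: it is what replaces the induction on $n$ that you are trying to carry out. To complete your proposal along your own lines, you would essentially need to reprove nonsingular Bezout over an arbitrary field by elimination with tight degree control, which I do not believe can be done in the elementary way you are hoping for; I would recommend switching to the $\Psi$-plus-Hensel-lifting structure instead.
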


From the theorem above, we get the following immediate corollary for counting the isolated zeroes of a system of polynomials over fields. This is one of the main implications of Bezout's theorem.

\begin{corollary}\label{corollary}
Let $\f = (f_1,\dots,f_n)$ be a sequence of polynomials in $\mathbb{F}\Big[X_1,\dots,X_n\Big]$ with $\deg(f_i) \leq k_i$. 
 Then $\mathcal{N}(\textbf{f})\leq k_1\cdots k_n$. 
\end{corollary}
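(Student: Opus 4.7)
The plan is to deduce the corollary from Theorem~\ref{main theorem} by specializing to $s=1$ and using the canonical inclusion $\mathbb{F} \hookrightarrow \mathbb{F}[t]$. Any $\f = (f_1,\ldots,f_n) \in (\mathbb{F}[X_1,\ldots,X_n])^n$ can be regarded as an element of $(\mathbb{F}[t][X_1,\ldots,X_n])^n$ whose coefficients happen to lie in $\mathbb{F} \subseteq \mathbb{F}[t]$, and the total degree $\deg(f_i) \le k_i$ is unchanged under this inclusion (recall that the paper's convention is that $\deg$ counts only degrees in the $X_j$'s, so the bound transfers with no fuss).

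First I would observe that the residue ring $\mathbb{F}[t]/t$ is canonically identified with $\mathbb{F}$, so that $(\mathbb{F}[t]/t)^n$ and $\mathbb{F}^n$ are literally the same set of candidate roots for the two notions of isolated zero.

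Next I would check that the two notions of isolated zero coincide on this common domain when $\f$ has coefficients in $\mathbb{F}$. Given $\va \in \mathbb{F}^n$, the evaluation $f_i(\va)$ lies in $\mathbb{F}$, so $f_i(\va) \equiv 0 \pmod t$ holds in $\mathbb{F}[t]$ if and only if $f_i(\va) = 0$ in $\mathbb{F}$. The same reasoning applied entry-by-entry to $J(\f;\va) \in \mathbb{F}^{n \times n}$ shows that $\det(J(\f;\va)) \in \mathbb{F}$, so the condition $\det(J(\f;\va)) \not\equiv 0 \pmod t$ is equivalent to $\det(J(\f;\va)) \ne 0$. Hence an element of $\mathbb{F}^n$ is an isolated zero of $\f$ in the sense of the first part of the definition if and only if it is an isolated zero modulo $t$ in the sense of the second part, giving $\mathcal{N}(\f) = \mathcal{N}_1(\f)$.

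Finally I would invoke Theorem~\ref{main theorem} with $s=1$ to conclude $\mathcal{N}(\f) = \mathcal{N}_1(\f) \le k_1 \cdots k_n$. There is no genuine obstacle here: the argument is essentially an unwinding of definitions together with the observation that the inclusion $\mathbb{F} \subseteq \mathbb{F}[t]$ preserves both the degree bounds and the notion of being nonzero. The only point worth being explicit about is that the paper's convention of ignoring the $t$-degree in $\deg(f_i)$ is precisely what makes the reduction immediate.
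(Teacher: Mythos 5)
Your proposal is correct and takes essentially the same route as the paper: view $\f$ as lying in $(\mathbb{F}[t][X_1,\ldots,X_n])^n$, note that an isolated zero in $\mathbb{F}^n$ is an isolated zero modulo $t$, and invoke Theorem~\ref{main theorem} with $s=1$. (The paper contents itself with the inequality $\mathcal{N}(\f) \leq \mathcal{N}_1(\f)$, while you verify the equality, but that is a cosmetic difference.)
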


\begin{proof}
The corollary follows by viewing $\f \in \F[X_1,\ldots,X_n]^n$ as elements of $(\F[t][X_1,\ldots,X_n])^n$ and noting that an isolated zero $\mathbf{a} \in \F^n$ of $\f$ is also an isolated zero modulo $t$ of $f$. Thus we have 
$\mathcal{N}(\textbf{f}) \leq \mathcal{N}_1(\textbf{f}) \leq k_1 \cdots k_n$.
\end{proof}

One of the highlights of the proof in \cite{Wooley} is that even though the corollary makes no reference to the variable $t$, the proof works with the ring $\F[t]$ and extensions of it!


\section{Proof of Theorem \ref{main theorem}}

Our proof follows the same outline as that in \cite{Wooley}. We give an overview here.

We consider the setting where $f_1,\ldots,f_n \in \F[X_1,\ldots,X_n]$. Roughly the presence of isolated zeroes implies that $f_1,\ldots,f_n$ can not be algebraically dependent. Our first lemma shows a low-degree algebraic dependence between $f_1,\ldots,f_n$ and the polynomial $X_1$. Specifically we find a polynomial $\Psi(Y_1,\ldots,Y_n,Z)$ of low-degree in $Z$ such that $\Psi(f_1,\ldots,f_n,X_1)$ is the zero polynomial. The intent would be to substitute values from $\F$  for $Y_1,\ldots,Y_n$, say $\alpha_1,\ldots,\alpha_n$,  so that the resulting polynomial $Q(Z) = \Psi(\alpha_1,\ldots,\alpha_n,Z)$ has a zero at $a_1$ for every $\textbf{a} = (a_1,\ldots,a_n)$ that is an isolated zero of $\f = (f_1,\ldots,f_n)$. A natural choice would be to set $Y_i = 0$  and then we do have $\Psi(0,\ldots,0,a_1) = 0$ as desired. This would be interesting if we knew $\Psi(0,\ldots,0,Z)$ is not identically zero, but this is not easy to establish! The central idea in \cite{Wooley} is to set $Y_i$ to some value $\alpha_i \in \F[t]$ such that $\alpha_i \equiv 0 \pmod t$. It is easy to find $\alpha_i$'s satisfying these conditions while ensuring $Q(Z)$ is not identically zero. And we do get 
$Q(a_i) \equiv \Psi(\alpha_1,\ldots,\alpha_n,a_i) \equiv 0 \pmod t$. But it is no longer clear why getting zeroes modulo $t$ of a polynomial that may itself be zero modulo $t$ might be interesting. Here \cite{Wooley} uses a clever idea of lifting the zeroes in $\F[t]$ of $Q(Z)$ modulo $t$, into zeroes from the field $\F((t))$ of $Q(Z)$, where $\F((t))$ is the field of formal Laurent series over $\F$ in the variable $t$. This field contains $\F[t]$ and so $Q(Z)$ becomes a polynomial over this field as well. The lifting itself is not a straightforward application of Hensel lifting, but rather depends on the way $Q(Z)$ was defined and in particular relies on the fact that $\textbf{a}$ is an {\em isolated} zero of $\f$. Putting these ideas together one gets a relatively simple proof of a Bezout-like theorem. 

We start below with a definition of the formal power series ring $\F[[t]]$ and the formal Laurent series field $\F((t))$. We then present our ``algebraic dependence lemma'' showing a low-degree algebraic dependence between $f_1,\ldots,f_n$ and the polynomial $X_1$. (See Lemma~\ref{lemma 2}.) Next, we give a Hensel lifting lemma that shows how to lift isolated zeroes modulo small powers of $t$ from $\F[t]$ of a system of polynomials to a zero from $\F[[t]]$. (See Lemma~\ref{lemma 3}.) Finally we combine these lemmas to obtain a proof of Theorem~\ref{main theorem} at the end of this section.

\begin{definition}[Formal Power Series and Formal Laurent Series]
The formal power series ring $\F[[t]]$ has as its members all formal infinite sums 
$\sum_{i \in \Z_{\geq 0}} a_i t^i$ where $a_i \in \F$. Addition and multiplication are defined in the usual way.
The formal Laurent series ring $\F((t))$ has as its members all formal infinite sums 
$\sum_{i \in \Z} a_i t^i$ where $a_i \in \F$ and the set $\{a_i \ne 0 | i \leq 0\}$ is finite.  Addition and multiplication are defined in the usual way.
\end{definition}

It is well-known that, for every field $\F$, $\F[[t]]$ is an integral domain  and $\F((t))$ is a field. We also use the fact that $\F \subseteq \F[t] \subseteq \F[[t]] \subseteq \F((t))$ where the inclusions preserve the ring operations.

\begin{lemma}\label{lemma 2}
Let $f_1,\dots,f_n$ be polynomials in $\mathbb{F}[t]\Big[X_1,\dots,X_n\Big]$ with respective degrees $k_1,k_2,\dots,k_n$. There exists a non-zero polynomial $\Psi \in \mathbb{F}[t]\Big[Y_1,\dots,Y_n,Z\Big]$ with $\deg_Z(\Psi) \leq  k_1\cdots k_n$ such that $\Psi(f_1,f_2,\dots,f_n,X_1) = 0$.
\end{lemma}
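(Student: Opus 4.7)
The plan is to find $\Psi$ via a dimension-counting (pigeonhole) argument applied to the substitution homomorphism
\[ \phi : \F(t)[Y_1,\ldots,Y_n,Z] \to \F(t)[X_1,\ldots,X_n], \quad Y_i \mapsto f_i,\ Z \mapsto X_1. \]
It suffices to exhibit a nonzero element of $\ker\phi$ of $Z$-degree at most $d := k_1\cdots k_n$, since clearing denominators in $t$ then yields the desired element of $\F[t][Y_1,\ldots,Y_n,Z]$ with the same $Z$-degree. Note that \emph{some} $\Psi \in \ker\phi$ always exists by a transcendence-degree argument---the $n+1$ elements $f_1,\ldots,f_n,X_1$ lie in $\F(t)[X_1,\ldots,X_n]$, which has transcendence degree $n$ over $\F(t)$---so the content of the lemma lies in the quantitative bound on $\deg_Z\Psi$.

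For the count, I would fix large parameters $N_1,\ldots,N_n$ and consider the $\F(t)$-subspace $V \subseteq \F(t)[Y,Z]$ of polynomials $\Psi$ with $\deg_{Y_i}\Psi \leq N_i$ and $\deg_Z\Psi \leq d$, which has dimension $(d+1)\prod_i(N_i+1)$. Using $\deg_{X_j}(f_i) \leq \deg(f_i) \leq k_i$, each image $\phi(\Psi)$ is a polynomial in $X_1,\ldots,X_n$ of total degree at most $\sum_i N_i k_i + d$, so $\phi(V)$ lands in a finite-dimensional subspace of $\F(t)[X_1,\ldots,X_n]$ whose dimension is controlled. Pigeonhole then produces a nonzero element of $\ker\phi \cap V$ as soon as the source dimension strictly exceeds that of the target.

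The main obstacle will be calibrating this count sharply enough to recover the precise Bezout bound $d = k_1\cdots k_n$. A direct total-degree estimate gives a source-to-target ratio tending to at most $n!/n^n$ by AM-GM (since $(\sum_i N_i k_i)^n \geq n^n \prod_i N_i \cdot d$), which is strictly less than $1$ for $n\geq 2$, so the crude pigeonhole fails. The way around is to exploit that $\phi(V)$ lies inside the proper subalgebra $\F(t)[f_1,\ldots,f_n,X_1]$, whose graded dimensions are smaller than those of the ambient polynomial ring---in the algebraically independent case this subalgebra is a module of rank $d$ over $\F(t)[f_1,\ldots,f_n]$, while the algebraically dependent case yields such a $\Psi$ immediately with $\deg_Z\Psi = 0$. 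Turning this structural refinement into an effective, elementary dimension count is where I expect the bulk of the work to lie.
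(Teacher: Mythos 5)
Your setup is the same as the paper's: consider the substitution $Y_i\mapsto f_i$, $Z\mapsto X_1$, restrict to a finite-dimensional space on the source side, and invoke pigeonhole once the source dimension exceeds the target dimension in $\F(t)[X_1,\ldots,X_n]$. You also correctly diagnose why the naive version of this fails: with a box constraint $\deg_{Y_i}\Psi\leq N_i$, $\deg_Z\Psi\leq d$, the source has dimension $(d+1)\prod_i(N_i+1)$ while the image lands in the space of polynomials of total degree $\leq \sum_i N_ik_i+d$, whose dimension grows faster, so pigeonhole gives nothing for $n\geq 2$. Up to here you are on track.

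The gap is in the fix. The paper does not exploit any structural refinement of the target subalgebra $\F(t)[f_1,\ldots,f_n,X_1]$; that route is substantially harder and, as you acknowledge, you do not see how to make it effective. Instead, the paper changes the \emph{shape of the source}: rather than a box in the exponents $(d_1,\ldots,d_n,r)$, it uses the weighted simplex
\[
M_{B,D}=\Big\{(d_1,\ldots,d_n,r)\in\Z_{\geq0}^{n+1}\ :\ r\leq B,\ \textstyle\sum_i d_ik_i + r\leq D\Big\},
\]
so that every substituted monomial $f_1^{d_1}\cdots f_n^{d_n}X_1^r$ has total degree $\leq D$ and the target is exactly $\mathcal{V}_D$, of dimension $\binom{D+n}{n}$. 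The counting then works: for each fixed $r$, the number of $\mathbf{d}$ with $\sum d_ik_i\leq D-r$ is at least $\binom{D-r+n}{n}/(k_1\cdots k_n)$ (by a tiling/averaging argument over residues $m_i\in\{0,\ldots,k_i-1\}$), so summing over $r=0,\ldots,B$ with $B=k_1\cdots k_n$ gives roughly $(B+1)/(k_1\cdots k_n)\cdot\binom{D+n}{n}=(1+1/(k_1\cdots k_n))\binom{D+n}{n}(1-O(D^{-1}))$, which strictly exceeds $\binom{D+n}{n}$ for $D$ large. This weighted-simplex counting is the missing idea; without it your argument does not close, and the subalgebra route you sketch would require developing Noether normalization / integral-extension rank facts that the paper deliberately avoids to keep the proof elementary.
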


\begin{remark}
We note that in \cite{Wooley} there is an additional requirement that $\deg_{X_1}(\Psi) > 0$. We do not make that a requirement (and this relaxation seems to clean up some steps of the proof below). On the other hand this condition is implied by the other conditions, so we are able to apply our $\Psi$ in the same way as it is in \cite{Wooley}.
\end{remark}

\begin{proof}
In what follows let $B$ and $D$ be two non-negative integers that will be specified later. 

Let $\mathcal{V}_D$ denote the vector space of all polynomials of degree at most $D$ in $\mathbb{F}(t)[X_1,X_2,\dots,X_n]$. The dimension of this space is just the number of monomials of degree at most $D$ which is given by $\binom{D+n}{n}$.

Now consider the following set of monomials,
\begin{equation*}
    M_{B,D} = \left\{Y_1^{d_1}\cdots Y_n^{d_n}Z^r \middle|   d_i,r\in \mathbb{Z}_{\geq 0}, r \leq B, \sum_{i=1}^n d_i k_i + r \leq D\right\}.
\end{equation*} 
Now let $S_{B,D}$ be the evaluations of the elements of $M_{B,D}$ at $Y_i = f_i$ for $i \in [n]$ and $Z = X_1$, i.e., let 
\begin{equation*}
    S_{B,D} = \left\{f_1^{d_1}\cdots f_n^{d_n}X_1^r \middle|   d_i,r\in \mathbb{Z}_{\geq 0}, r \leq B, \sum_{i=1}^n d_i k_i + r \leq D\right\}.
\end{equation*} 
$S_{B,D}$ is thus a subset of $\F(t)[X_1,\ldots,X_n]$. Furthermore every polynomial in $S_{B,D}$ has degree at most $D$ and so $S_{B,D} \subseteq \mathcal{V}_D$. In what follows we show that $|M_{B,D}| > \dim(\mathcal{V}_D)$ (for an appropriate choice of $D$)  to get a non-trivial linear dependence among these monomials when evaluated at $(f_1,\ldots,f_n,X_1)$. This will yield the non-zero polynomial $\Psi$ we are seeking.

For $\textbf{m} = (m_1,\ldots,m_n) \in (\Z_{\geq 0})^n$ and $r \in \Z_{\geq 0}$, denote by $S(r;\textbf{m})$ the number of $n$-tuples $\textbf{d} \in (\Z_{\geq 0})^n$ such that 
\begin{equation*}
    \sum_{i=1}^n k_id_i + m_i \leq D-r ~~~\forall i \in [n].
\end{equation*}
Note that the quantity we wish to lower bound is  $|S_{B,D}| = \sum_{r=0}^B S(r;\mathbf{0})$. Since $S(r;\textbf{m})$ is a non-increasing function of $\textbf{m}$ we also have for every $\textbf{m} \in (\Z_{\geq 0})^n$, $S(r;\textbf{m})\leq S(r;\mathbf{0})$. Using this fact, we get the following inequality,
\begin{align*}
      k_1k_2\cdots k_n S(r;\mathbf{0}) & = \sum_{m_1=0}^{k_1-1}\sum_{m_2=0}^{k_2-1}\dots \sum_{m_n=0}^{k_n-1} S(r;\mathbf{0}) \\
      & \geq \sum_{m_1=0}^{k_1-1}\sum_{m_2=0}^{k_2-1}\dots \sum_{m_n=0}^{k_n-1} S(r;\mathbf{m}) \\
      & = \binom{D+n-r}{n},\numberthis \label{eqn3}
\end{align*}
where the last equality follows from the observation that the previous expression exactly counts the number of solutions of the inequality,
\begin{equation*}
    e_1+e_2+\cdots +e_n \leq D - r, e_i\in \mathbb{Z}_{\geq 0}.
\end{equation*}
Applying Inequality (\ref{eqn3}), we get
\begin{align*}
    \sum_{r=0}^B S(r;0) & \geq \frac{1}{k_1\ldots k_n} \sum_{r=0}^B \binom{D+n-r}{n} \\
    &= \frac{B+1}{k_1\ldots k_n}\binom{D+n}{n}(1-\mathcal{O}(D^{-1})),
\end{align*}
where the constant in the $\mathcal{O}$ notation only depends on $B,\textbf{k},n$ (and does not depend on $D$). It follows that for $B = k_1\dots k_n$, we can choose $D$ sufficiently large so that $(1 + 1/(k_1\cdots k_n))(1 - O(D^{-1})) > 1$ and so 
\begin{equation*}
\sum_{r=0}^R S(r;0) > \binom{D+n}{n} = \dim(\mathcal{V}_d).
\end{equation*}
We thus conclude that $|M_{B,D}| > \dim(\mathcal{V}_d)$ and so there is a non-zero sequence $(C_m \in \F(t))_{m \in M_{B,D}}$ such that for the polynomial $\Psi_0(Y_1,\ldots,Y_n,Z) \triangleq \sum_{m \in M_{B,D}} C_m \cdot m$ we have $\Psi_0(f_1,\ldots,f_n,X_1) = 0$. By construction $\deg_Z(\Psi_0) \leq k_1\cdots k_n$.  By clearing the denominators of the coefficients $C_m$ we can get a polynomial $\Psi \in \F[t]\big[Y_1,\ldots,Y_n,Z\big]$ with the same $Z$-degree which also vanishes at $(f_1,\ldots,f_n,X_1)$. This concludes the proof of the lemma.
\end{proof}




\begin{lemma}\label{lemma 3}
Let $\g = (g_1,\dots,g_n) \in \left(\mathbb{F}[t]\Big[X_1,\dots,X_n\Big]\right)^n$. Suppose that $\textbf{a} \in \mathbb{F}[[t]]^n$ satisfies $g_j(\textbf{a}) \equiv 0 \pmod {t^s}$ for $1\leq j \leq n$ and $\det(J(\textbf{g};\textbf{a})) \neq 0 \mod t$. Then there exists   $\textbf{b}\in \mathbb{F}[[t]]^n$ such that $g_j(\textbf{b}) = 0 $ for all $1\leq j \leq n$ and $\textbf{b}\equiv \textbf{a} \pmod {t^s}$.
\end{lemma}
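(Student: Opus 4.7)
The plan is to prove this by multivariate Hensel lifting, constructing $\vb$ as the $t$-adic limit of a sequence of successively refined approximations $\vb_s, \vb_{s+1}, \vb_{s+2}, \ldots \in \F[[t]]^n$ starting from $\vb_s = \va$ and maintaining two invariants: (i) $g_j(\vb_m) \equiv 0 \pmod{t^m}$ for every $j \in [n]$, and (ii) $\vb_{m+1} \equiv \vb_m \pmod{t^m}$. Invariant (i) with $m = s$ is exactly the given hypothesis on $\va$, so the base case is free.

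For the inductive step, given $\vb_m$ I would look for a correction of the form $\vb_{m+1} = \vb_m + t^m \vc$ with $\vc \in \F^n$. Since the $g_j$ are polynomials, the exact Taylor expansion at $\vb_m$ reads
\begin{equation*}
g_j(\vb_m + t^m \vc) \;=\; g_j(\vb_m) \;+\; t^m \sum_{i=1}^n \frac{\partial g_j}{\partial X_i}(\vb_m)\, c_i \;+\; t^{2m}\,(\text{higher-order terms}).
\end{equation*}
Writing $g_j(\vb_m) = t^m h_j$ with $h_j \in \F[[t]]$ (permitted by invariant (i)), the requirement $g_j(\vb_{m+1}) \equiv 0 \pmod{t^{m+1}}$ collapses, after cancelling a factor of $t^m$ and reducing mod $t$, to the finite-field linear system
\begin{equation*}
J(\g;\vb_m)\, \vc \;\equiv\; -(h_1,\ldots,h_n)^{\top} \pmod{t}.
\end{equation*}
Because $m \geq s \geq 1$, invariant (ii) forces $\vb_m \equiv \va \pmod{t}$, hence $J(\g;\vb_m) \equiv J(\g;\va) \pmod t$, and by hypothesis this matrix has nonzero determinant mod $t$. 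So its reduction is invertible over $\F$ and the system has a unique solution $\vc \in \F^n$, producing $\vb_{m+1}$.

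Finally, the sequence $\{\vb_m\}$ is Cauchy in the $t$-adic metric by invariant (ii), so it converges coordinatewise to some $\vb \in \F[[t]]^n$ by completeness of $\F[[t]]$. Continuity of polynomial evaluation gives $g_j(\vb_m) \to g_j(\vb)$; since each $g_j(\vb_m) \in t^m \F[[t]]$, the limit lies in $\bigcap_m t^m \F[[t]] = \{0\}$, so $g_j(\vb) = 0$. The congruence $\vb_m \equiv \va \pmod{t^s}$ for every $m \geq s$ passes to the limit, yielding $\vb \equiv \va \pmod{t^s}$.

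The only substantive point is the preservation of the nonsingularity of the Jacobian at each iterate, and this is the step I expect to be the main (though modest) obstacle: it is handled cleanly by the observation that every iterate $\vb_m$ agrees with $\va$ modulo $t$, so the Jacobian's reduction mod $t$ never changes. Everything else is routine power-series bookkeeping. One could alternatively use a Newton-style iteration that doubles precision per step, but the single-digit lift above suffices and keeps the algebra transparent.
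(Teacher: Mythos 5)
Your proof is correct and follows essentially the same approach as the paper's: a step-by-step Hensel lift, solving a linear system over $\F$ at each stage using the Taylor expansion and the invertibility of $J(\g;\va) \bmod t$, then passing to the $t$-adic limit. The one place you flag as a potential obstacle (preservation of nonsingularity of the Jacobian across iterates) you resolve exactly as the paper does, by observing that every iterate agrees with $\va$ modulo $t$.
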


\begin{proof}
Let $\textbf{a} = \sum_i a(i)t^i$, where $a(i) \in \F^n$. We prove by induction on $i \in \{s,s+1,\ldots\}$ that there exists $\va_i \in \mathbb{F}[[t]]^n$, with $a_s = a$, satisfying:
\begin{enumerate}
    \item $g_j(\va_i) \equiv 0 \mod t^i$ for all $j$,
    \item $\va_i \equiv \va_{i-1} \mod t^{i-1}$, for every $i > s$. 
\end{enumerate}
Note that the case of $i=s$ is true by the hypothesis of the lemma. Assume the inductive assumption is true for some $i\geq s$; we will show how to construct $\va_{i+1}$. We will show that there exists $\vb_i \in \F^n$ such that  $\va_{i+1}\triangleq \va_i+t^i\vb_i$ satisfies
$g_j(\va_{i+1}) \equiv 0 \mod t^{i+1}$ for all $j$. 
By construction, we have that the second condition is satisfied, since $\va_{i+1} = \va_{i} = \va \pmod {t^s}$, hence we only need to find $b_i$ such that the first condition is satisfied.

We first recall how polynomials behave under local perturbations. For every $P \in \F[[t]][X_1,\ldots,X_n]$, there exist polynomials $P^{(k\ell)} \in \F[[t]][X_1,\ldots,X_n,Y_1,\ldots,Y_n]$, for $k,\ell \in [n]$ such that 
$$P(X+Y) = P(X) + \sum_{k\in [n]} \frac{\partial P}{\partial X_k}(X) Y_k + \sum_{k,\ell \in [n]} P^{(k\ell)}(X,Y) Y_k Y_\ell.$$ where $X = (X_1,\ldots,X_n)$ and $Y = (Y_1,\ldots,Y_n)$. Applying it to our functions $g_1,\ldots,g_n$ at $X = \va_i$ and $Y = \vb_i t^i$ we have 
$$ \g(\va_i + t^i \vb_i) = \g(\va_i) + t^i \cdot J(\f;\va_i) \cdot \vb_i + t^{2i} \h(\va_i,\vb_i),$$
for some $\h \in (\F[[t]][X,Y])^n$. 
Thus to get $\g(\va_i + t^i \vb_i) = 0 \mod t^{i+1}$ we need $\g(\va_i) + t^i \cdot J(\f;\va_i)\cdot \vb_i = 0 \mod t^{i+1}$. Since 
$\g(\va_i) = 0 \mod t^i$ we can divide the equation above by $t^i$ to see that we need $J(\f;\va_i)\cdot \vb_i = - t^{-i}\g(\va_i) \mod t$. Note that since we are working modulo $t$, we have that the equation: $(J(\f;\va_i) \mod t) \cdot \vb_i = (-t^{-i}\g(\va_i) \mod t)$, is over field constants. We also have that $\det(J(\g,\va_i)) = \det(J(\g,\va)_ \ne 0 \pmod t$ and so the system above can be inverted, to yield $\vb_i \in \F^n$. This yields $\va_{i+1}$ as desired and completes the inductive step.

The $\vb$ as required by the lemma is now obtained as follows: Let $\vb_i$ for $i \geq s$ be as obtained in the above inductive proof. Let $b(i) = a(i)$, for all $i < s$ and let $b(i) = a(i) + \vb_i$, for all $i \geq s$. Then $\vb =  \sum_{i} b(i) t^i$ gives the element of $\F[[t]]^n$ satisfying $g_j(\vb) = 0$ for all $j \in [n]$ and $\vb \equiv \va \pmod {t^s}$.

\end{proof}

We are now ready to prove Theorem~\ref{main theorem}.

\begin{proof}[Proof of Theorem~\ref{main theorem}:]
Assume for contradiction that the system $\f = (f_1,\ldots,f_n)$ has at least $1+ \prod_{i=1}^n k_i$ isolated zeroes modulo $t^s$. Let $S$ denote such a set of cardinality $1+ \prod_{i=1}^n k_i$. Furthermore assume that the projections of the elements of $S$ to their first coordinate are distinct modulo $t^s$, i.e., for $S_1 \triangleq \{a_1 | \textbf{a} = (a_1,\ldots,a_n) \in S\}$ we assume $|S| = |S_1| = 1+ \prod_{i=1}^n k_i$ and that for $a_1,a'_1 \in S_1$ we have 
$a_1 \ne a'_1 \pmod {t^s}$.
(We can ensure this by applying a generic affine transformation from $\F^n \to \F^n$ to the variables $X_1,\ldots,X_n$ which will ensure that the first coordinates are all distinct. If the field $\F$ is finite we can apply this transformation over any large extension field.) 

Let $\Psi \in \mathbb{F}[t][Y_1,Y_2,\dots,Y_n,Z]$ denote the non-zero polynomial given by  applying Lemma \ref{lemma 2} to the system $\f = (f_1,\ldots,f_n)$. So $\Psi$ satisfies $\deg_Z(\Psi) \leq \prod_{i=1}^nk_i$ and $\Psi(f_1,f_2,\dots,f_n,X_1) = 0$. Since $\Psi$ is a non-zero polynomial, there always exists constants $c_1,c_2,\dots c_n \in \mathbb{F}$ such that the polynomial, $Q(Z)\triangleq \Psi(c_1t^s,c_2t^s,\dots,c_nt^s,Z)\in \mathbb{F}[t][Z]$ is a non-zero polynomial of degree at most $\prod_{i=1}^n k_i$. We will first show that the elements of $S_1$ are zeros of $Q \mod t^s$. We will then be able to lift these to distinct zeroes of $Q$ in the ring $\F[[t]]$ and hence in the field $\F((t))$. This will yield the desired contradiction since $Q$ is a non-zero univariate polynomial with degree at most $\prod k_i$, whereas we will show that it has $|S_1|$ distinct roots, which is more roots than its degree.

Consider an isolated zero modulo $t^s$, say $\textbf{a} = (a_1,a_2,\dots,a_n) \in S$,  of the system $\f$. We have \begin{align*}
    Q(a_1) & \equiv \Psi (c_1 t^s, ... , c_d t^s, a_1) \pmod {t^s}\\
    & \equiv \Psi (0, ... , 0, a_1) \pmod {t^s}~~~\mbox{(Since $c_i t^s \pmod {t^s} \equiv 0$)}\\
    & \equiv \Psi (f_1(\textbf{a}),\dots,f_n(\textbf{a}),a_1) \pmod {t^s}~~~\mbox{(Since $f_i(\textbf{a}) \pmod {t^s} \equiv 0$)} \\
    & \equiv 0 \pmod {t^s}.\\
\end{align*}
Next we show that every such zero $a_1 \in \F[t]$ of $Q  \mod t^s$ can be lifted to a distinct zero of $Q$ in the ring $\F[[t]]$. 

We first note that if there exists $\textbf{b}=(b_1,b_2,\dots,b_n) \in \mathbb{F}[[t]]^n$ such that for all $i$, $f_i(\textbf{b})=c_i t^s$, then we have that $Q(b_1) = \Psi(c_1 t^s,\ldots,c_n t^s, b_1) = \Psi (f_1(\textbf{b}),\ldots,f_n(\textbf{b}),b_1) = 0$. Thus to get our lifted $b_1$ it suffices to find a $b_1$ such that $b_1 = a_1 \pmod {t^s}$ and $f_i(\textbf{b})=c_i t^s,\forall i$. Now let $g_i(X_1,\ldots,X_n) = f_i(X_1,\ldots,X_n) - c_i t^s$, for $i \in [n]$. By construction we have
that $\textbf{a}$ is an isolated zero of $\g = (g_1,\ldots,g_n)$ modulo $t^s$ (the shifts by $c_i t^s$ do not alter this). Thus by applying Lemma~\ref{lemma 3} to $\g$ and $\textbf{a}$ we get that there exists $\textbf{b} \in \F[[t]]^n$ that is a zero of $\g$, and thus satisfies
$f_i(\textbf{b}) = c_i t^s$ for every $i \in [n]$. Thus, we conclude that $Q(b_1) = 0$. We also have that for distinct $a_1$ and $a'_1 \in S_1$ their lifts $b_1$ and $b'_1$ satisfy $b_1 \equiv a_1 \not\equiv a'_1 \equiv b'_1 \pmod t$, so $b_1$ and $b'_1$ are distinct. So, we conclude that $Q$ has more zeroes than its degree yielding the desired contradiction with the assumption that $\f$ has $1 + \prod_{i=1}^n k_i$ isolated zeroes. 
\end{proof}

\section{Further discussion}

There are two main differences between Theorem~\ref{main theorem} and the well-known Bezout's theorem: (1) The usual theorem can count isolated zeroes with multiplicities (and in particular a mutliple zero is counted as several isolated zeroes), whereas we do not count such zeroes. (2) The usual theorem allows more polynomials than the number of variables and this is particularly helpful in counting the number of zeroes on subspaces even when the polynomials have algebraic dependence. It would be interesting to see if the methods from \cite{Wooley} and this paper can be extended to either of these cases.

\section*{Acknowledgements}

We are grateful to Trevor Wooley for pointing us to the results in \cite{Zhao}.

\end{document}